\newtheorem{thm}{Theorem}
\newtheorem{lem}[thm]{Lemma}
\newtheorem{cor}[thm]{Corollary}
\begin{document}

\begin{frontmatter}[classification=text]


\author[lp]{Luke Pebody}

\begin{abstract}
In Croot, Lev and Pach's groundbreaking 
work~\cite{CrootLevPach}, the authors showed that a subset
of $\mathbb{Z}_4^{n}$ without an arithmetic
progression of length 4 must be of size at most
$3.1^n$. No prior upper bound of the form $(|G|-\epsilon)^n$
was known for the corresponding question in $G^n$ for any
abelian group $G$ containing elements of order greater than two.

Refining the technique in~\cite{CrootLevPach}, Ellenberg and Gijswijt~\cite{EllenbergGijswijt} showed that a subset of $\mathbb{Z}_3^{n}$ without an
arithmetic progression of length 3 must be of size at most 
$2.755^n$ (no prior upper bound of the form 
$(3-\epsilon)^n$ was known). They also provided, for any prime power $q$, a value 
$\lambda_q<q$ such that any subset of $\mathbb{Z}_q^{n}$ without an arithmetic progression of length 3 must be of size at
most $\lambda_q^n$.

Blasiak, Church, Cohn,
Grochow, Naslund, Sawin, and Umans~\cite{BlasiakEtAl} showed that the same 
bounds
apply to tri-coloured sum-free sets, which are triples 
$\{(a_i,b_i,c_i):a_i,b_i,c_i\in\mathbb{Z}_p^{n}\}$ with
$a_i+b_j+c_k=0$ if and only if $i=j=k$. 

Building on this work, an early version of a paper by 
Kleinberg, Sawin and 
Speyer~\cite{KleinbergSawinSpeyerEarly} gave a description of
a value $\mu_q$ such that for any
$\epsilon>0$, tri-coloured sum-free sets of size 
$e^{(\mu_q-\epsilon) n}$
exist in $\mathbb{Z}_q^{n}$ but, if $q$ is a prime power, for all
sufficiently large $n$, 
no tri-coloured sum-free sets 
of size $e^{\mu_q n}$ exist in $\mathbb{Z}_p^{n}$. The value of 
$\mu_q$ 
was
left open in the early version of this paper, 
but a conjecture was stated which would imply 
that 
$e^{\mu_q}=\lambda_q$, i.e. the Ellenberg-Gijswijt bound
is essentially tight for the tri-coloured sum-free set problem.

This note proves that conjecture and closes that gap. The 
conjecture of Kleinberg, Sawin and Speyer is true, and the 
Ellenberg-Gijswijt bound is essentially tight for the 
tri-coloured sum-free set problem.
\end{abstract}
\end{frontmatter}

\section{Introduction and Definitions}
Let $G$ be an abelian group. Define a 
{\em tri-coloured sum-free set in $G$} to be a 
collection of triples
$(a_i, b_i, c_i)$ in $G^3$ such that 
$a_i+b_j+c_k=0$ if and only if $i=j=k$. Write 
$\mathrm{sfs}(G)$
for the size of the largest tri-coloured sum-free set in $G$ and
$r_3(G)$ for the size of the largest set in $G$ with 
no
three-term arithmetic progression. If $X\subseteq G$ is a set 
with no three-term arithmetic progression, then
$\{(x, x, -2x):x\in X\}$ is a tri-coloured sum-free set, so 
$r_3(G)\le\mathrm{sfs}(G)$.

Until recently the question of whether $\lim_{n\to\infty}r_3(G^n)^{\frac1n}$ was less than $|G|$ was open for every
abelian $G$ containing elements of order greater than 2.

Croot, Lev and Pach~\cite{CrootLevPach} introduced a polynomial
method to show that there was indeed strict inequality for
$G=\mathbb{Z}_4$. Building on this, 
Ellenberg and Gijswijt~\cite{EllenbergGijswijt} showed there was
strict inequality for all cyclic groups with prime power order. They proved:
\begin{thm}\label{esStatement}
Let $p$ be a prime power, and let
$\theta_p$ denote the minimum value of \[\frac{1+\beta+\ldots+\beta^{p-1}}{\beta^{(p-1)/3}}.\] 

Then all sets in $\mathbb{Z}_p^n$ with no three-term arithmetic
progression are of size at most $C\theta_p^n$.
\end{thm}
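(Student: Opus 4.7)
My plan is to prove Theorem~\ref{esStatement} by the polynomial method, in its slice-rank reformulation due to Tao. Let $A\subseteq\mathbb{Z}_p^n$ contain no three-term arithmetic progression. Since $p$ is odd, this means that for $a,b,c\in A$ the equation $a-2b+c=0$ forces $a=b=c$. The first step is to encode this rigidity via the polynomial
\[
T(x,y,z) \;=\; \prod_{i=1}^n \Bigl(1-(x_i-2y_i+z_i)^{p-1}\Bigr),
\]
which takes the value $1$ when $x-2y+z=0$ and $0$ otherwise on $(\mathbb{Z}_p^n)^3$. Restricted to $A\times A\times A$, it becomes a diagonal tensor with $|A|$ nonzero diagonal entries (each equal to $1$), so Tao's slice-rank lemma gives that its slice rank equals exactly $|A|$.

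For the matching upper bound I would expand $T$ as a polynomial. Each factor has degree $p-1$ in the variables $x_i,y_i,z_i$, so $T$ is a linear combination of monomials $x^{\alpha}y^{\beta}z^{\gamma}$ with $\alpha,\beta,\gamma\in\{0,\dots,p-1\}^n$ and $|\alpha|+|\beta|+|\gamma|\le n(p-1)$. In any such monomial at least one of $|\alpha|,|\beta|,|\gamma|$ lies in $[0,\,n(p-1)/3]$; grouping monomials according to which variable is ``light'' writes $T=T_1+T_2+T_3$, where for instance
\[
T_1 \;=\; \sum_{\substack{\alpha\in\{0,\dots,p-1\}^n\\ |\alpha|\le n(p-1)/3}} x^{\alpha}\,f_{\alpha}(y,z).
\]
Each $T_i$ is a sum of $M_n$ rank-one slices, where $M_n$ denotes the number of vectors $\alpha\in\{0,\dots,p-1\}^n$ with $|\alpha|\le n(p-1)/3$; hence the slice rank of $T$ is at most $3M_n$.

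I would conclude by estimating $M_n$ via a Chernoff-style exponential weighting: for any $\beta>0$,
\[
M_n \;\le\; \beta^{-n(p-1)/3}\sum_{\alpha\in\{0,\dots,p-1\}^n}\beta^{|\alpha|}
\;=\; \frac{(1+\beta+\cdots+\beta^{p-1})^n}{\beta^{n(p-1)/3}}.
\]
Minimizing the right-hand side over $\beta>0$ yields $M_n\le\theta_p^n$ by the definition of $\theta_p$, so $|A|\le 3\theta_p^n$, which proves the theorem with $C=3$.

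The main non-routine ingredient is Tao's slice-rank lemma, which asserts that a diagonal tensor supported on a set of size $|A|$ has slice rank exactly $|A|$; one imports its short inductive proof. The remaining work---the polynomial expansion and the Markov-style count of low-weight exponent vectors---is elementary bookkeeping.
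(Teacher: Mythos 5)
The paper does not prove Theorem~\ref{esStatement}: it is quoted as the Ellenberg--Gijswijt theorem~\cite{EllenbergGijswijt} and used only as background, so there is no in-paper proof to compare against. Your sketch is a correct reconstruction of the (by now standard) argument in Tao's slice-rank reformulation of the Croot--Lev--Pach/Ellenberg--Gijswijt method: the indicator polynomial $T$, the observation that $T$ restricted to $A^3$ is diagonal with nonzero entries (hence has slice rank $|A|$), the pigeonhole split $T=T_1+T_2+T_3$ giving slice rank at most $3M_n$, and the exponential-weight bound $M_n\le\theta_p^n$, yielding $|A|\le 3\theta_p^n$. Two small points to tidy. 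First, the inequality $M_n\le\beta^{-n(p-1)/3}\sum_\alpha\beta^{|\alpha|}$ requires $0<\beta\le 1$, so that $\beta^{|\alpha|-n(p-1)/3}\ge 1$ on the range $|\alpha|\le n(p-1)/3$; writing ``for any $\beta>0$'' is not correct as stated, though it is harmless here because the minimiser of $(1+\beta+\cdots+\beta^{p-1})/\beta^{(p-1)/3}$ does lie in $(0,1)$ (it is the paper's $\rho$). Second, the theorem is for prime powers $q$, and the identity $1-u^{q-1}=\mathbf{1}[u=0]$ holds over the field $\mathbb{F}_q$, not over the ring $\mathbb{Z}_q$ when $q$ is a proper prime power; the paper's notation $\mathbb{Z}_p^n$ should be read as $\mathbb{F}_q^n$ (Ellenberg--Gijswijt's actual setting), and your ``since $p$ is odd'' aside is needed precisely so that $a-2b+c=0$ with $a,b,c\in A$ forces $a=b=c$, which correctly excludes the degenerate characteristic-$2$ case. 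With these caveats the structure of your argument is exactly right.
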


Blasiak et al~\cite{BlasiakEtAl} showed that the same 
bounds apply to tri-coloured sum-free sets. Based on the result 
in this paper, Kleinberg, Sawin and 
Speyer~\cite{KleinbergSawinSpeyer} have shown this bound
is essentially tight for tri-coloured sum-free sets, showing that:

\begin{thm}\label{kssStatement}
For any integer $p$ (whether a prime power or not), with
$\theta_p$ defined as above,
for $n$ 
sufficiently large, there are tri-coloured sum-free sets in $\mathbb{Z}_p^n$ of size
at least 
\[\theta_p^ne^{-2\sqrt{(2\log\theta_p\log 2) n}-O_p(\log n)}.\]
\end{thm}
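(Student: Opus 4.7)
The plan is to establish the conjecture of Kleinberg, Sawin, and Speyer that $\mu_p = \log \theta_p$, where $\mu_p$ is the variational quantity defined in their earlier paper~\cite{KleinbergSawinSpeyerEarly}. Once this equality is in hand, the quantitative existence statement of Theorem~\ref{kssStatement} follows from KSS's own construction: they already show that for every $\epsilon > 0$ tri-coloured sum-free sets of size $e^{(\mu_p - \epsilon)n}$ exist, and tracking their construction quantitatively — using type classes at Hamming distance $\sqrt{n}$ from the optimal profile, together with the standard random-subset extraction (keep each triple with probability $1/2$) that turns a large set of sum-zero triples into a genuine tri-coloured sum-free set — produces exactly the $\exp\bigl(-2\sqrt{(2\log\theta_p\log 2)\,n} - O_p(\log n)\bigr)$ correction.

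Recall that $\mu_p$ is the supremum of the Shannon entropy $H(\pi)$, where $\pi$ ranges over joint probability distributions on the set $T_p := \{(a,b,c)\in\{0,\ldots,p-1\}^3 : a+b+c \equiv 0 \pmod p\}$ subject to the constraint that all three marginals of $\pi$ coincide. The plan is to solve this optimization by Lagrangian duality. Because $H$ is strictly concave and the constraints (normalization and equal marginals) are affine, the unique optimizer must factor as
\[\pi(a,b,c) \;\propto\; u(a)\,u(b)\,u(c)\]
on $T_p$, for some nonnegative weight function $u$ on $\{0,\ldots,p-1\}$. I would try the one-parameter Ansatz $u(k) = \beta^{k/3}$: then $u(a)u(b)u(c) = \beta^{(a+b+c)/3}$ depends only on the \emph{integer} sum $a+b+c \in \{0,p,2p\}$, and the marginal-equality constraint is automatic by the cyclic symmetry of $T_p$, leaving $\beta$ as a free parameter. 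Computing $H(\pi)$ in closed form and minimizing over $\beta > 0$ should reproduce the Ellenberg–Gijswijt expression
\[ e^{H(\pi)} \;=\; \min_{\beta > 0}\; \frac{1+\beta+\cdots+\beta^{p-1}}{\beta^{(p-1)/3}} \;=\; \theta_p,\]
giving $\mu_p \ge \log \theta_p$. The reverse inequality $\mu_p \le \log \theta_p$ is weak duality applied to the same Lagrangian, and also matches the Ellenberg–Gijswijt upper bound already known on the sum-free set problem.

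The principal obstacle will be the bookkeeping for the multi-slice structure of the support. The set $T_p$ is the disjoint union of the three affine slices $\{a+b+c = 0\}$, $\{a+b+c = p\}$, $\{a+b+c = 2p\}$, and the Ansatz gives a different total weight on each; one must verify that these contributions assemble cleanly into a single probability distribution whose marginals really are identically equal, and that the resulting entropy collapses to the clean $(1+\beta+\cdots+\beta^{p-1})/\beta^{(p-1)/3}$ form. A subsidiary point is to confirm that the exponential Ansatz exhausts all KKT critical points — this follows from uniqueness of the maximizer of a strictly concave function on a convex set, once one exhibits any $u$ satisfying the first-order conditions, which the Ansatz does.
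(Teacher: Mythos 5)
Your high-level reduction matches the paper's: cite the KSS lower-bound construction (Theorem~\ref{kssStatement2}) and reduce Theorem~\ref{kssStatement} to the identity $\lambda_p=\log\theta_p$, which is precisely the conjecture the paper proves. The problem is that your proposed proof of that identity does not work, because you are solving the wrong variational problem. The quantity $\lambda_p$ is the maximum \emph{marginal} entropy $\eta(\rho)$ over $\rho\in\Delta_p$, i.e.\ over single-variable laws on $[p]$ realizable as the common marginal of a coupling $(X_1,X_2,X_3)$ with $X_1+X_2+X_3=p-1$ deterministically. You instead maximize the \emph{joint} entropy $H(\pi)$ on $T_p$. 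These genuinely differ: since $X_3$ is determined by $(X_1,X_2)$, one has $H(\pi)=H(X_1,X_2)\ge H(X_1)=\eta(\rho)$, with strict inequality in general. Concretely, your Ansatz $u(k)=\beta^{k/3}$ makes $\pi(a,b,c)\propto\beta^{(a+b+c)/3}$, which on a slice of constant sum (say $a+b+c=p-1$) is just the \emph{uniform} distribution; its joint entropy is $\log\frac{p(p+1)}{2}$ (for $p=3$ this is $\log 6\approx 1.79$, while $\log\theta_3\approx 1.01$), and its marginal is the piecewise-linear law $\rho(a)\propto p-a$, not the geometric law $\psi_\rho$. The expression $(1+\beta+\cdots+\beta^{p-1})/\beta^{(p-1)/3}$ equals $e^{\eta(\psi_\rho)}$ --- a marginal entropy of a geometric law --- and the ``clean collapse'' of your joint-entropy computation to that form does not occur.

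Even with the correct objective, Lagrangian duality cannot supply the hard direction. The inclusion $\Delta_p\subseteq\Delta'_p$ already gives $\lambda_p\le\log\theta_p$ essentially for free; the content of the conjecture is the achievability statement $\psi_\rho\in\Delta_p$, i.e.\ that three copies of the geometric law $\psi_\rho$ can be coupled with deterministic sum $p-1$. That is a feasibility question about the polytope $\Delta_p$, not a first-order/KKT question, and first-order conditions on the larger relaxation $\Delta'_p$ say nothing about whether the resulting maximizer lies in $\Delta_p$. This is exactly what the paper establishes combinatorially in Theorem~\ref{T:main} (any three decreasing laws on $[p]$ with expectations summing to $p-1$ are compatible), via the convex-decomposition Lemma~\ref{L:simpleDistributions}, the inequality and induction of Lemma~\ref{L:inductionstep} and Corollary~\ref{C:inductionstep}, and the explicit order-statistics coupling of Lemma~\ref{L:lastcase}. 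Your proposal contains no analogue of this construction, and that construction is the missing idea.
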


To give some motivation for the result in this paper, 
let $\Delta_p$ denote the set of distributions $\pi$
on $[p]=\{0, 1, 2, \ldots, p-1\}$ such that you can have
three variables $X_1, X_2$ and $X_3$ all distributed
according to $\pi$ with $X_1+X_2+X_3$ constant equal
to $p-1$.

Given a probability distribution $\pi$ on any finite set $S$,
define the {\em entropy} of $\pi$ to be 
\[\eta(\pi)=-\sum_{i\in S}\pi(i)\log\pi(i).\]

Then if we
define $\lambda_p=\max\{\eta(\pi):\pi\in\Delta_p\}$,
an early version of the paper by
Kleinberg, Sawin and Speyer~\cite{KleinbergSawinSpeyerEarly}
showed that:
\begin{thm}\label{kssStatement2}
Let $p$ be a positive integer, and define $\lambda_p$ as above. 
For any $n$, if $p$ is a prime power, then all tri-coloured sum-free sets in 
$\mathbb{Z}_p^n$ are of size at most 
$e^{\lambda_p n}$.

For all $p$ (whether a prime power or not), for $n$ 
sufficiently large, there are tri-coloured sum-free sets in $\mathbb{Z}_p^n$ of size
at least \[e^{\lambda_p n-2\sqrt{(2\lambda_p\log 2) n}-O_p(\log n)}.\]
\end{thm}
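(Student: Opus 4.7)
The plan is to prove the theorem in two independent halves. For prime power $p$, the upper bound $N \le e^{n\lambda_p}$ on the size $N$ of a tri-coloured sum-free set comes from the slice rank / polynomial method of Croot--Lev--Pach, Ellenberg--Gijswijt, Tao and Blasiak et al., reformulated so that the bound is expressed as an entropy maximum over $\Delta_p$ rather than the Ellenberg--Gijswijt quantity $\theta_p$. For all $p$, the lower bound is an explicit probabilistic construction on the type class of a distribution $\pi^*$ achieving $\lambda_p$.

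For the upper bound, let $W = \{(a_i, b_i, c_i) : 1 \le i \le N\}$ be a tri-coloured sum-free set in $\mathbb{Z}_p^n$. Working over $\mathbb{F}_p$, define the tensor $T \colon W^3 \to \mathbb{F}_p$ by $T(i, j, k) = 1$ if $a_i + b_j + c_k = 0$ and $0$ otherwise. By the tri-coloured hypothesis $T$ is the $N \times N \times N$ identity tensor, whose slice rank equals $N$; but $T(i, j, k) = F(a_i, b_j, c_k)$ where $F(x, y, z) = \prod_{r=1}^n \bigl(1 - (x_r + y_r + z_r)^{p-1}\bigr)$, so $N$ is at most the slice rank of $F$ on $(\mathbb{F}_p^n)^3$. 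Expand $F$ as a sum of monomials $x^\alpha y^\beta z^\gamma$; assigning each monomial to one of the three variable groups and bounding by the sum over axes of the number of distinct exponent multi-indices assigned, the slice rank is controlled by a minimax over assignments. The multi-index counts are governed by the method of types, and linear-programming duality identifies the minimum over assignments with the maximum entropy over distributions $\pi \in \Delta_p$, yielding $N \le e^{n\lambda_p}$.

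For the lower bound, fix $\pi^* \in \Delta_p$ with $\eta(\pi^*) = \lambda_p$, realised by a joint law on $(X_1, X_2, X_3) \in [p]^3$ with common marginal $\pi^*$ and $X_1 + X_2 + X_3 = p - 1$ almost surely. Form the set $\mathcal{T}_n$ of triples $(a, b, c) \in ([p]^n)^3$ whose componentwise joint empirical distribution matches that of $(X_1, X_2, X_3)$ after integer rounding. All triples in $\mathcal{T}_n$ satisfy $a + b + c = (p-1)\mathbf{1}$, and the marginal set $\{a : (a, \cdot, \cdot) \in \mathcal{T}_n\}$ (and likewise for $b$, $c$) has size at least $e^{n\lambda_p - O_p(\log n)}$. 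A random sparsification, retaining each triple independently with probability $q$ and deleting one triple per surviving off-diagonal collision $(i, j, k)$ with $a_i + b_j + c_k = (p-1)\mathbf{1}$, produces a tri-coloured sum-free collection; optimising $q$ against the expected number of bad collisions gives the claimed bound $e^{n\lambda_p - 2\sqrt{2\lambda_p \log 2 \cdot n} - O_p(\log n)}$, and a coordinate shift converts the constant sum to zero as the definition requires.

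The main difficulty on the upper bound is the LP / minimax step: obtaining \emph{some} slice rank bound from a monomial count is routine, but matching it exactly to the entropy maximum $\lambda_p$ requires the correct variational formulation of the monomial-assignment problem. On the lower bound, the technical challenge is tracking the optimal constant $2\sqrt{2\lambda_p \log 2}$ in the $\sqrt{n}$ correction: the expected bad-collision count and the survival probability $q$ must be balanced precisely, and the value of $q$ saturating this balance is what produces the specific exponent.
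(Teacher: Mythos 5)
This statement is not proved in the paper at all: Theorem~\ref{kssStatement2} is quoted from the early Kleinberg--Sawin--Speyer preprint~\cite{KleinbergSawinSpeyerEarly}, and the paper's own contribution is Theorem~\ref{T:main}, which establishes that the distribution $\psi_\rho$ lies in $\Delta_p$, so that $\lambda_p = \log\theta_p$ and the two quoted bounds coincide. So there is no internal argument to compare yours against; you have re-derived (in outline) a cited result.

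Taking your sketch on its own merits, the broad architecture --- slice rank for the upper bound, a type-class construction plus random sparsification for the lower bound --- is indeed the Blasiak et al./Kleinberg--Sawin--Speyer route. But there is a concrete gap in the upper-bound half: you begin with ``working over $\mathbb{F}_p$'' and use the indicator polynomial $\prod_r\bigl(1-(x_r+y_r+z_r)^{p-1}\bigr)$, which only detects $x+y+z=0$ in $\mathbb{F}_p^n$ when $p$ is prime. For a prime power $p$ that is not prime, the group in the theorem is the cyclic group $\mathbb{Z}_p$, which is not $(\mathbb{F}_p,+)$ (e.g.\ $\mathbb{Z}_4$ has an element of order $4$, $\mathbb{F}_4$ does not), and $\mathbb{Z}_p$ is not even an integral domain, so the polynomial-as-stated does not do the job. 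Croot--Lev--Pach and Blasiak et al.\ handle this with a separate argument over $\mathbb{Z}/q\mathbb{Z}$ (or by decomposing $\mathbb{Z}_{q}$ appropriately), and your sketch would need to import that to cover all prime powers rather than just primes. You also correctly flag the LP/minimax step as the crux, but it deserves to be stated more sharply: the relevant optimisation is over joint distributions on columns $(\alpha_r,\beta_r,\gamma_r)$ with $\alpha_r+\beta_r+\gamma_r\le p-1$, and at the entropy optimum the three marginals coincide and the support constraint tightens to equality, which is exactly membership in $\Delta_p$ rather than the weaker mean constraint $\Delta'_p$; this is the non-routine point, since the naive degree-based monomial assignment only gives $\Delta'_p$, i.e.\ $\theta_p^n$. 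Finally, on the lower bound, the set $\mathcal{T}_n$ of triples with the prescribed joint type generally repeats $a$-, $b$-, and $c$-coordinates across many triples, so it is not itself close to a tri-coloured sum-free collection; the sparsification and deletion step must be set up so that the surviving triples have distinct first, distinct second, and distinct third coordinates and no cross collisions, and the balancing of the retention probability $q$ against the collision count is precisely what yields the $2\sqrt{2\lambda_p\log 2\cdot n}$ term. Your outline gestures at this but does not carry it out.
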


There is a natural comparison between this and the bound
of Ellenberg and Gijswijt~\cite{EllenbergGijswijt}. Clearly any distribution $\pi$ in
$\Delta_p$ has expected value $\frac{p-1}3$.
Let $\Delta'_p$ be the set of all probability
distributions on $[p]$ with expected value 
$\frac{p-1}3$. Then the problem of finding
$\lambda'_p=\max\{\eta(\pi):\pi\in\Delta'_p\}$ is considerably easier.

Let $\rho$ be the unique positive real number such that
the probability distribution
\[\psi_\rho(j)=\frac{\rho^j}{1+\rho+\ldots+\rho^{q-1}}:
0\le j\le q-1\]
has expected value $\frac{p-1}3$. Then $\psi_\rho$ is
the unique distribution in $\Delta'_p$ with maximal entropy. Further, this maximal entropy  is precisely $\log\theta_p$
where $\theta_p$ is as defined in the statement 
of Theorem~\ref{esStatement}.

In the early version of their paper~\cite{KleinbergSawinSpeyerEarly},
Kleinberg, Sawin and Speyer
conjectured that this distribution $\psi_\rho$ is in 
$\Delta_p$, and so the upper bounds in 
Theorem~\ref{kssStatement2} and 
Theorem~\ref{esStatement} are the same. The purpose of this note is to prove this conjecture. 

Say that probability distributions $\pi_1, \pi_2, \pi_3$ 
are {\em compatible} if 
we can choose dependent random variables
$X_1, X_2, X_3$ such that $X_i$ has distribution 
$\pi_i$ for each $i$ and $X_1+X_2+X_3$ is
constant, and say that a discrete non-negative integer-valued probability distribution $\pi$ is
{\em decreasing} if $\pi(0)\ge\pi(1)\ge\ldots$.

We will prove the following:
\begin{thm}\label{T:main}
If three discrete non-negative
integer-valued probability distributions are decreasing, 
only take values in $[p]=\{0, 1, \ldots, p-1\}$ and 
have expected values summing to 
$p-1$ then they are compatible.
\end{thm}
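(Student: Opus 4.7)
The plan is to establish compatibility via linear-programming duality. The coupling problem asks for a non-negative tensor $f_{ijk}$ supported on the slice $\{(i,j,k) : i+j+k = p-1\}$ with prescribed marginals $\pi_1,\pi_2,\pi_3$; by LP duality (Farkas), such an $f$ exists if and only if for every triple of functions $\phi_1, \phi_2, \phi_3 : [p] \to \mathbb{R}$ satisfying the slice constraint $\phi_1(i) + \phi_2(j) + \phi_3(k) \ge 0$ whenever $i+j+k = p-1$, one has $\sum_{i=1}^{3}\sum_{k} \pi_i(k)\phi_i(k) \ge 0$. I would first normalize using the two-parameter shift symmetry that replaces $(\phi_1,\phi_2,\phi_3)$ by $(\phi_1+t_1, \phi_2+t_2, \phi_3+t_3)$ with $t_1+t_2+t_3=0$ (which preserves both constraints and objective because each $\pi_i$ is a probability distribution), and then rewrite the objective by Abel summation in terms of the tail sequences $\bar F_i(k) = P(X_i > k)$. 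The decreasing hypothesis then makes each $\bar F_i$ convex and non-increasing on $\{-1,0,\ldots,p-1\}$, while the mean-sum hypothesis becomes the single identity $\sum_{i=1}^{3}\sum_{k=0}^{p-2}\bar F_i(k) = p-1$.

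The next step is to exploit the slice constraint. The condition $\phi_1(i)+\phi_2(j)+\phi_3(k)\ge 0$ on $i+j+k=p-1$ is exactly the statement that $-\phi_3(k)\le \min_{i+j=p-1-k}[\phi_1(i)+\phi_2(j)]$ for each $k$, so taking $\phi_3$ equal to this tightest bound and substituting reduces the dual LP to a two-variable minimization over $(\phi_1,\phi_2)$. A direct verification of the affine case $\phi_i(k) = \alpha_i k + \beta_i$ already yields the inequality, with equality precisely when $\sum_i \mathrm{mean}(\pi_i)=p-1$: this identifies the mean-sum hypothesis as the critical threshold and suggests the inequality extends to general $\phi_i$ by an extremal argument. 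I would attempt this extension by decomposing each $\pi_i$ as a convex combination of uniform distributions $u_k$ (the extreme points of the polytope of decreasing distributions on $[p]$), reducing to verifications for ``uniform triples'' $(u_{k_1},u_{k_2},u_{k_3})$ of indices summing to $k_1+k_2+k_3=2p+1$ (the mean-sum condition at the level of the uniform indices), for which an explicit coupling can be exhibited.

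The main obstacle I foresee is completing this extension, because the uniform-triple decomposition need not yield compatible coupling triples across the three marginals. For instance, with $p=4$, the choice $\pi_1=\tfrac{1}{3}u_1+\tfrac{2}{3}u_4$ and $\pi_2=\pi_3=u_3$ satisfies all hypotheses, yet no triple $(k_1,k_2,k_3)$ in the supports of the uniform decompositions of the $\pi_i$'s satisfies $k_1+k_2+k_3=2p+1=9$, even though the triple is compatible. So a naive uniform-by-uniform coupling argument fails, and a more delicate analysis is needed---most likely combining the convexity of the tails $\bar F_i$ with the precise value $p-1$ of the mean sum at which the dual inequality becomes tight, together with an inductive reduction on $p$ or on the number of distinct values of the $\phi_i$'s that circumvents the support issue illustrated by the example above.
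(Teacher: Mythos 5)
You correctly identify the key obstacle yourself, and your counterexample with $p=4$, $\pi_1=\tfrac13 u_1+\tfrac23 u_4$, $\pi_2=\pi_3=u_3$ is a valid demonstration that decomposing each $\pi_i$ separately into uniforms and then coupling uniform-by-uniform cannot work: the product of the three support sets contains no index triple meeting the mean-sum constraint. But you leave the gap open, so as written this is not a proof.

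The idea you are missing is that the decomposition must be done \emph{jointly} across the three distributions, with a single weight vector shared by all three marginals, chosen so that \emph{each component of the joint decomposition individually satisfies the mean-sum constraint}. Concretely, the paper writes each decreasing $\pi_i$ as $\widehat{\psi_i}$ (where $\widehat{\psi}$ samples $i\sim\psi$ and then $j$ uniform on $\{0,\dots,i\}$) and proves a convexity lemma: given any finite family $\psi_1,\dots,\psi_n$, one can write $\psi_i=\sum_j p_j\,\psi_{i,j}$ with the \emph{same} weights $p_j$ for every $i$, such that for each $j$ the sum $\sum_i \mathbb{E}(\psi_{i,j})$ equals $\sum_i\mathbb{E}(\psi_i)$, and such that in each component at least $n-1$ of the $\psi_{i,j}$ are constant and the remaining one takes at most two values. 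Pushing this through the hat operator reduces the problem to triples where two marginals are pure uniforms $U_a, U_b$ and the third is either a pure uniform $U_c$ or a two-uniform mixture $V_{c,d,x}$ — and it is precisely allowing that one slot to be a \emph{mixture} rather than a pure uniform that rescues the mean-sum constraint in cases like yours (your example reduces to coupling $U_2, U_2, V_{0,3,2}$). These base cases are then dispatched by induction on $p$: an averaging bound (the paper's Lemma~\ref{L:inductionstep}) lets one peel off probability mass and reduce to $[p-1]$ whenever at most one marginal has mass at $p-1$, and an explicit construction (coupling two uniforms so that their sum is uniform, done twice with a coin flip) handles the remaining case where two of the three indices equal $p-1$.

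One secondary remark: your LP-duality and Abel-summation framing, and the convexity of the tail functions $\bar F_i$, are plausible starting points but are not what drives the paper's argument, and you do not carry them far enough to extract the needed inequality for non-affine dual variables. The paper avoids duality entirely and constructs the coupling directly by the reduction above. If you want to pursue the duality route you would still need something playing the role of the joint decomposition to localize the problem, so the missing ingredient is the same either way.
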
 

Since $\psi_\rho$ is clearly decreasing (as $\rho<1$), it will quickly 
follow that $\psi_\rho$ is in $\Delta_p$
thereby completing the proof of Kleinberg, Sawin and 
Speyer~\cite{KleinbergSawinSpeyer} that the 
Ellenberg-Gijswijt bound is essentially tight for the 
tri-coloured sum-free set problem.

\section{Simple Distributions}
In this section we will use a convexity argument to 
show that it will be sufficient to prove Theorem~\ref{T:main}
if we can prove it for fairly simple distributions.

\begin{lem}\label{L:simpleDistributions}
Given any finite collection $\phi_1, \phi_2, \ldots, \phi_n$ of probability distributions on any finite
set $S$, we can simultaneously express the distributions as
 a non-negative linear 
combinations $\phi_i=\Sigma_jp_j\phi_{i,j}$ of distributions which satisfy:
\begin{enumerate}
\item For each j, the sum of the expectations of the $\phi_{i,j}$ is the same as that of the $\phi_i$
\item For each j, at least $n-1$ of the $\phi_{i,j}$ are constant and the other (if it is not constant) takes two values.
\end{enumerate}
\end{lem}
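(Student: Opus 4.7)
\medskip\noindent\textbf{Proof plan.} The plan is to identify the tuple $(\phi_1,\dots,\phi_n)$ as a point in a polytope and to decompose it as a convex combination of its extreme points, which will turn out to be exactly the simple tuples described in condition~(2).

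Let $E=\sum_i \mathbb{E}[\phi_i]$ and consider the set
\[
P=\Bigl\{(\psi_1,\dots,\psi_n)\in \mathbb{R}_{\ge 0}^{n|S|}\;:\; \sum_{s\in S}\psi_i(s)=1\ \forall i,\ \sum_{i}\sum_{s\in S} s\,\psi_i(s)=E\Bigr\}.
\]
This is a polytope sitting inside $\mathbb{R}^{n|S|}$, cut out by $n+1$ linear equalities and the non-negativity inequalities, and it is nonempty because $(\phi_1,\dots,\phi_n)\in P$. Any element of $P$ is a tuple of probability distributions on $S$ whose expectations sum to $E$, so any way of writing $(\phi_1,\dots,\phi_n)$ as a convex combination of points of $P$ yields a decomposition satisfying condition~(1). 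By Carath\'eodory (or equivalently, by iteratively pushing along directions that preserve membership in $P$ until hitting a face), $(\phi_1,\dots,\phi_n)$ is a convex combination of vertices of $P$, so it suffices to show that every vertex of $P$ already has the form required by condition~(2).

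The key step is the dimension count. The affine hull of $P$ has dimension at most $n(|S|-1)-1$: the $n$ probability-sum constraints cut the ambient $n|S|$-dimensional space down to dimension $n(|S|-1)$, and the expectation-sum constraint is non-trivial on this subspace (assuming $|S|\ge 2$, else the lemma is vacuous), dropping the dimension by one more. At a vertex of $P$, at least $n(|S|-1)-1$ of the inequality constraints $\psi_i(s)\ge 0$ must be active, so the total number of strictly positive coordinates is at most
\[
n|S|-\bigl(n(|S|-1)-1\bigr)=n+1.
\]
Since each $\psi_i$ has at least one positive coordinate (it sums to $1$), the total is at least $n$. Hence at a vertex, either every $\psi_i$ is a point mass, or exactly one $\psi_i$ has two positive coordinates and the remaining $n-1$ are point masses. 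This is precisely condition~(2).

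The main obstacle is simply making the dimension argument rigorous---in particular, verifying that the expectation constraint is genuinely independent of the probability constraints, and handling trivial edge cases (e.g.~$|S|=1$, or $E$ on the boundary of its feasible range, where some vertices degenerate to tuples of point masses). Everything else is a direct application of standard polytope theory, so the proof should be short once the polytope $P$ and its dimension are pinned down.
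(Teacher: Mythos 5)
Your proof is correct, but it takes a genuinely different route from the paper. The paper proceeds by induction on the total support size: it explicitly constructs one ``simple'' tuple $(\phi_{1,1},\dots,\phi_{n,1})$ by sliding each coordinate from its support minimum to its support maximum until the expectation sum is bracketed (so that some index $k$ gets a two-valued distribution on $\{\min_k,\max_k\}$ and the others are point masses), then subtracts off the largest possible multiple $p_1$ of that tuple, which kills at least one support element, and recurses. Your argument instead observes that $(\phi_1,\dots,\phi_n)$ lives in the polytope $P$ cut out by the $n$ normalisation equalities, the single expectation-sum equality, and non-negativity, and that a dimension count at any vertex of $P$ forces at most $n+1$ strictly positive coordinates, hence at most one distribution in the tuple is non-degenerate and that one has support of size at most two; decomposing into vertices (Carath\'eodory, or the obvious greedy ``walk to a face'' iteration) then finishes. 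The dimension count is sound: the $n$ normalisation constraints are independent, and the expectation constraint is independent of them whenever $|S|\ge 2$ since its coefficient $s$ is not constant across $S$, while the degenerate cases $|S|\le 1$ or a lower-rank system only decrease the bound on positive coordinates and are therefore harmless. The paper's construction is more elementary and self-contained, and as a bonus produces two-valued factors supported on the current min and max of the relevant $\phi_i$; your argument is shorter and more conceptual once one is willing to invoke standard polytope facts, and it makes transparent that the ``simple'' tuples of the lemma are precisely the vertices of the natural feasibility polytope.
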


\begin{proof}
We proceed by induction on the sum of the support sizes of the
distributions. Suppose the support sizes sum to $m$, and the result is true whenever the
support sizes sum to less than $m$. For $1\le i\le n$, let 
$\mathrm{min}_i$ denote the smallest element of the support of $\phi_i$
and $\mathrm{max}_i$ the largest.

Clearly $\Sigma_i\mathrm{min}_i\le
\Sigma_i\mathbb{E}(\phi_i)\le
\Sigma_i\mathrm{max}_i$. Changing one of the parameters at a 
time over
from $\mathrm{min}_i$ to $\mathrm{max}_i$ shows there exists a $k$ such that 
\begin{align*}
\Sigma_{i<k}\mathrm{max}_i+\mathrm{min}_k+\Sigma_{i>k}\mathrm{min}_i&\le
\mathbb{E}(\phi_1)+\mathbb{E}(\phi_2)+\ldots+\mathbb{E}(\phi_n)\\
&\le
\Sigma_{i<k}\mathrm{max}_i+\mathrm{max}_k+\Sigma_{i>k}\mathrm{min}_i.
\end{align*}

Thus if we let $\phi_{i,1}$ be constant equal to $\mathrm{max}_i$ for $i<k$ and
constant equal to $\mathrm{min}_i$ for $i>k$ and to either be equal to $\mathrm{min}_k$
or $\mathrm{max}_k$ for $i=k$, we can rig up the probabilities to have 
$\Sigma_i\mathbb{E}(\phi_i)=
\Sigma_i\mathbb{E}(\phi_{i,1}).$

Now if $\phi_i=\phi_{i,1}$ for all $i$, we are done. Otherwise let 
$p_1$ be the smallest value of $\phi_{i}(t)/\phi_{i,1}(t)$
(ranging over $i, t$ where the denominator is non-zero)
and then
$\phi'_i=\frac{1}{1-p_1}({\phi_i(k)-p_1\phi_{i,1}(k)})$ are 
distributions with the same sum of expectations but with at least one
support size strictly smaller, so we can use the induction hypothesis to 
generate the remaining distributions.
\end{proof}

Given an integer-valued distribution $\phi$, denote by 
$\widehat{\phi}$ the distribution you get by first choosing a value 
$i$ according to $\phi$ and then choosing a value $j$ uniformly from
$\{0,1,\ldots,i\}$. Clearly $\widehat{\phi}$ is decreasing and any
decreasing distribution is of this form. Furthermore, clearly this operator
respects linear combinations.

We will apply this operator to the distributions in 
Lemma~\ref{L:simpleDistributions}. It will be useful to define
the distribution $U_k$ 
for each non-negative integer $k$ as the uniform distribution on 
$\{0, 1, \ldots, k\}$ 
and for positive integers $k, l$ and real $x$ with $k<x< l$ the distribution
$V_{k,l,x}$ as a linear combination of $\frac{l-x}{l-k}$ weight of $U_k$ and 
$\frac{x-k}{l-k}$ weight of $U_l$, weights so chosen as to make the expectation
equal to $\frac{x}{2}$.

\begin{cor}\label{C:simpleDistributions}
Suppose that the positive integer $p$ satisfies that for all 
non-negative integers 
$0\le x_1, x_2, x_3<p$ with 
$x_1+x_2+x_3=2(p-1)$, $U_{x_1}, U_{x_2}$ and $U_{x_3}$
are compatible.

Suppose also that for all non-negative integers $0\le x_1, x_2,
y, z<p$ with 
$x_1+x_2+y<2(p-1)<x_1+x_2+z$, $U_{x_1}, U_{x_2}$ and 
$V_{y, z, 2k-x_1-x_2}$ are compatible.

Then any triple of decreasing distributions on $[p]$
with expected value equal to $p-1$ are compatible.
\end{cor}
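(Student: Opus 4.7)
The plan is to reduce to the simple cases handled by the hypotheses via the correspondence $\pi = \widehat{\phi}$ between decreasing distributions and arbitrary distributions on $[p]$, combined with Lemma~\ref{L:simpleDistributions}. Given three decreasing distributions $\pi_1,\pi_2,\pi_3$ on $[p]$ with expectations summing to $p-1$, I would first write each as $\pi_i=\widehat{\phi_i}$ for some distribution $\phi_i$ on $[p]$ (as noted in the paragraph before the corollary). Because conditioning $\widehat{\phi}$ on $\phi=i$ yields a uniform on $\{0,\ldots,i\}$ with mean $i/2$, we have $\mathbb{E}(\widehat{\phi_i})=\mathbb{E}(\phi_i)/2$, so the $\phi_i$ have expectations summing to $2(p-1)$.

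Next, I would apply Lemma~\ref{L:simpleDistributions} to the triple $\phi_1,\phi_2,\phi_3$ to obtain a decomposition $\phi_i=\sum_j p_j\,\phi_{i,j}$ in which, for every $j$, the sum $\sum_i \mathbb{E}(\phi_{i,j})$ is still $2(p-1)$, at most one of $\phi_{1,j},\phi_{2,j},\phi_{3,j}$ is non-constant, and that one takes at most two values. Since $\widehat{\cdot}$ respects linear combinations, $\pi_i=\sum_j p_j\,\widehat{\phi_{i,j}}$. A point mass $\delta_k$ maps to $U_k$; a two-valued distribution supported on $y<z$ with expected value $2(p-1)-x_1-x_2$ (where $x_1,x_2$ are the constants in the other two slots) maps to $V_{y,z,\,2(p-1)-x_1-x_2}$, and the mixing weights force $y<2(p-1)-x_1-x_2<z$ whenever the piece is genuinely two-valued, which is exactly the straddling condition in the second hypothesis. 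Also, $y,z\in\{0,\ldots,p-1\}$ since $\phi_i$ is supported there.

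For each $j$ the corollary's hypotheses now give compatibility of $(\widehat{\phi_{1,j}},\widehat{\phi_{2,j}},\widehat{\phi_{3,j}})$: the first hypothesis covers the all-$U$ case (when the Lemma outputs three constants), and the second (together with the obvious permutation symmetry of compatibility, which lets the single $V$ sit in any slot) covers the case with exactly one $V$. To conclude I would observe that compatibility is closed under convex combinations of triples sharing the same sum-of-expectations: if each slice is realised by a joint distribution with $X_1+X_2+X_3$ equal to a constant, then that constant equals $\mathbb{E}(X_1)+\mathbb{E}(X_2)+\mathbb{E}(X_3)=p-1$, so the constant is the \emph{same} across all slices, and mixing the joint distributions yields a joint realisation of $(\pi_1,\pi_2,\pi_3)$ with $X_1+X_2+X_3\equiv p-1$.

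The main thing to get right is the bookkeeping that the constant sum is the same ($=p-1$) for every piece of the Lemma~\ref{L:simpleDistributions} decomposition, which is exactly what the Lemma's first conclusion guarantees and what licenses the final convex-combination step; the remainder is a translation between the $\phi$-world and the $\widehat{\phi}$-world.
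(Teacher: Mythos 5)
Your proposal is correct and follows essentially the same route as the paper: write each decreasing $\pi_i$ as $\widehat{\phi_i}$, apply Lemma~\ref{L:simpleDistributions} to the $\phi_i$, push the decomposition back through the linear operator $\widehat{\cdot}$ to obtain pieces of the form $U_{x}$ and $V_{y,z,\cdot}$, and close under convex combinations. You in fact spell out two points the paper leaves implicit — that $\mathbb{E}(\widehat{\phi})=\mathbb{E}(\phi)/2$ (hence the $\phi_i$ have expectations summing to $2(p-1)$) and that the constant value of $X_1+X_2+X_3$ is the same ($=p-1$) across all slices, which is what licenses the final mixing step.
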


\begin{proof}
Take such a triple $\{\phi_1, \phi_2, \phi_3\}$.
Then as noted above, the $\phi_i$ can be written in the form 
$\widehat{\psi_i}$
where the $\psi_i$ for some distribution $\psi_i$ bounded above by $k$ with 
expected values summing to $2k$.

By Lemma~\ref{L:simpleDistributions}, $\psi_1, \psi_2, \psi_3$
can be expressed as a non-negative 
linear combination of distributions with the same sum
of expected values where two are constant 
and the third takes at most two values.

Reapplying the operator it follows that $\phi_1, \phi_2, 
\phi_3$ can
be expressed as a non-negative
 linear combination of distributions with the same sum of
expected values where two are of the form $U_k$ 
and the third is either of the form $U_k$ or of the form
$V_{k,l,x}$. Therefore if the assumptions of this Corollary hold, they are the
linear combination of compatible distributions.

Since the non-negative linear combination of 
compatible distributions yields 
compatible
distributions, it follows the original collection is compatible.
\end{proof}

\section{The Induction Step}
Our proof of Theorem~\ref{T:main} will be by induction on $k$.
We deal first with the case where at most one of the distributions has support 
including $k-1$.

\begin{lem}\label{L:inductionstep}
If $\pi_1, \pi_2, \pi_3$ are decreasing distributions on $[p]$ with
$\mathbb{E}(\pi_1)+\mathbb{E}(\pi_2)+\mathbb{E}(\pi_3)=p-1$, then for all integers $t$ in the range $0\le t<p$,
$\mathbb{P}(\pi_1=0)\le\mathbb{P}(\pi_2>t)+\mathbb{P}(\pi_3\ge(p-1)-t)$.
\end{lem}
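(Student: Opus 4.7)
The plan is to recast the inequality in terms of tail probabilities and reduce to a one-distribution estimate. Write $A^{(i)}_j = \mathbb{P}(\pi_i \ge j)$ and $\mu_i = \mathbb{E}(\pi_i)$, so that $\mathbb{P}(\pi_1 = 0) = 1 - A^{(1)}_1$ makes the lemma equivalent to
\[A^{(1)}_1 + A^{(2)}_{t+1} + A^{(3)}_{p-1-t} \ge 1.\]
The constraints available are $\mu_1 + \mu_2 + \mu_3 = p-1$ together with each $\pi_i$ being decreasing. The central step is a one-distribution bound: for any decreasing distribution $\pi$ on $[p]$ with expectation $\mu$ and any integer $0 \le u \le p$,
\[\mathbb{P}(\pi \ge u) \;\ge\; \frac{2\mu - u + 1}{p}.\]

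To prove this I would use the fact that the tail sequence $A_j = \mathbb{P}(\pi \ge j)$ is discretely convex in $j$, since $A_{j-1} + A_{j+1} - 2A_j = \pi(j-1) - \pi(j) \ge 0$. Combining convexity with $A_0 = 1$ and $A_p = 0$ yields two chord inequalities bounding $A_j$ above on $\{0,\ldots,u\}$ by the line through $(0, 1)$ and $(u, A_u)$, and on $\{u,\ldots,p\}$ by the line through $(u, A_u)$ and $(p, 0)$. Summing these over $j = 1,\ldots,p-1$ simplifies to $\mu = \sum_j A_j \le ((u-1) + p A_u)/2$, which rearranges to the target. The inequality is sharp, attained by an appropriate mixture of $U_{u-1}$ and $U_{p-1}$.

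Applying the bound to the three distributions with $u_1 = 1$, $u_2 = t+1$, $u_3 = p-1-t$ and summing the three naive lower bounds, the numerators collapse (using $\mu_1 + \mu_2 + \mu_3 = p-1$ and $u_1 + u_2 + u_3 = p+1$) to exactly $p$, so the sum of the naive bounds equals exactly $1$. Whenever all three are non-negative this immediately gives the lemma. For the remaining edge cases I would use the auxiliary fact $\mu_i \le (p-1)/2$, itself a quick consequence of the chord bound $A_j \le (p-j)/p$ for decreasing $\pi$ on $[p]$. The first bound $2\mu_1/p$ is automatically non-negative. If the second naive bound is negative, i.e.\ $\mu_2 < t/2$, then $\mu_1 \le (p-1)/2$ forces $\mu_3 > (p-2-t)/2$, making the third bound strictly positive and large enough on its own to yield $A^{(1)}_1 + A^{(3)}_{p-1-t} > 1$; the symmetric case is analogous. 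The second and third bounds being simultaneously negative would require $\mu_1 > p/2$, contradicting $\mu_1 \le (p-1)/2$, so this case does not arise.

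The main technical obstacle is really the convexity-based one-distribution bound $\mathbb{P}(\pi \ge u) \ge (2\mu - u + 1)/p$. Everything else is arithmetic bookkeeping, made clean by the observation that the constraint $\mu_1 + \mu_2 + \mu_3 = p-1$ is exactly what is needed to make the three tail-probability lower bounds sum to $1$, precisely the quantity the lemma demands.
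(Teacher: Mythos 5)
Your argument is correct and is essentially the paper's argument, repackaged. The heart of both is the single-distribution tail bound $\mathbb{P}(\pi\ge u)\ge(2\mu-u+1)/p$, applied with $u=1,\,t+1,\,p-1-t$ so that the three constants telescope to exactly $1$. The paper derives that bound by splitting $\mathbb{E}(\pi)$ into the conditional expectations on $\{\pi\le t\}$ and $\{\pi>t\}$ and bounding each by the midpoint of its range; you derive the same bound by observing that the tail sequence $A_j=\mathbb{P}(\pi\ge j)$ is discretely convex (since $A_{j-1}+A_{j+1}-2A_j=\pi(j-1)-\pi(j)\ge0$), pinned at $A_0=1$ and $A_p=0$, and summing the two chord bounds through $(u,A_u)$. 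Both proofs are one-line computations and yield the identical inequality, so this is the same route, not a new one.

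One piece of your write-up is superfluous, and slightly misleading about the logic: the case analysis for when a ``naive bound'' is negative. The inequalities $A^{(i)}_{u_i}\ge b_i$ hold for all three $i$ regardless of the sign of $b_i$ (a lower bound with a negative right-hand side is simply a weak one, not an invalid one), so summing them gives $A^{(1)}_1+A^{(2)}_{t+1}+A^{(3)}_{p-1-t}\ge b_1+b_2+b_3=1$ unconditionally. There is no edge case to handle. Your discussion of the cases where some $b_i<0$ and the auxiliary fact $\mu_i\le(p-1)/2$ is harmless but entirely unnecessary, and its presence suggests you believed the summation step required nonnegativity of each term, which it does not.
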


\begin{proof}
If $\pi$ is a decreasing distribution on $[p]$ then for any integer
$0\le t<p$, the expected value of $\pi$ given $\pi>t$ is at most $\frac{t+p}2$
and the expected value of $\pi$ given $\pi\le t$ is at most $\frac t2$.

Thus the expected value of $\pi$ is at most $\frac t2+\frac p2\mathbb{P}(\pi>t)$.

Therefore
\begin{align*}
p-1&=\mathbb{E}(\pi_1)+\mathbb{E}(\pi_2)+\mathbb{E}(\pi_3)\\
&\le\frac{0+t+(p-2)-t}2+\frac p2(
\mathbb{P}(\pi_1>0)+\mathbb{P}(\pi_2>t)+\mathbb{P}(\pi_3>p-2-t))\\
&\le\frac{p-2}2+\frac p2(
\mathbb{P}(\pi_1>0)+\mathbb{P}(\pi_2>t)+\mathbb{P}(\pi_3>p-2-t))
\end{align*}

which simplifies to $1\le\mathbb{P}(\pi_1>0)+\mathbb{P}(\pi_2>t)+\mathbb{P}(\pi_3>p-2-t)$.
Subtracting $\mathbb{P}(\pi_1>0)$ from both sides gives the required inequality.
\end{proof}

This allows us to handle a lot of cases by induction.
\begin{cor}\label{C:inductionstep}
Suppose that for all decreasing distributions $\pi_1, \pi_2, \pi_3$ on $[p-1]$
with $\mathbb{E}(\pi_1)+\mathbb{E}(\pi_2)+\mathbb{E}(\pi_3)=p-2$, $\pi_1, \pi_2$ and $\pi_3$ are
compatible.

Then for all decreasing distributions $\pi_1, \pi_2, \pi_3$ on $[p]$ with
$\mathbb{E}(\pi_1)+\mathbb{E}(\pi_2)+\mathbb{E}(\pi_3)=p-1$ with $\pi_2(p-1)=\pi_3(p-1)=0$,
$\pi_1, \pi_2$ and $\pi_3$ are compatible.
\end{cor}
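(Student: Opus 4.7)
The plan is to condition on whether $X_1 = 0$ and apply the inductive hypothesis on the event $X_1 \geq 1$. Set $q = \pi_1(0)$; since $\pi_1$ is decreasing we have $q > 0$, and since the maximum expectation of a decreasing distribution on $\{0,\ldots,p-2\}$ is $(p-2)/2$ we cannot have $q = 1$ (that would force $\mathbb{E}(\pi_2)+\mathbb{E}(\pi_3) = p-1 > p-2$, a contradiction, given that $\pi_2(p-1)=\pi_3(p-1)=0$). Let $\pi_1'(t) = \pi_1(t+1)/(1-q)$ on $[p-1]$, which is a decreasing distribution. On the event $A = \{X_1 = 0\}$ of probability $q$, the constraints $X_1+X_2+X_3 = p-1$ and $X_2, X_3 \in \{0,\ldots,p-2\}$ force $X_2, X_3 \in \{1,\ldots,p-2\}$ with $X_3 = p-1-X_2$, so the conditional law of $X_2$ is some distribution $\pi_2^A$ supported on $\{1,\ldots,p-2\}$ and $\pi_3^A(s) = \pi_2^A(p-1-s)$. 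On the complementary event $B = \{X_1 \geq 1\}$, the substitution $X_1' = X_1 - 1$ reduces the task to coupling $(\pi_1', \pi_2^B, \pi_3^B)$ on $[p-1]$ with sum $p-2$, to which the inductive hypothesis applies provided $\pi_2^B, \pi_3^B$ are valid decreasing distributions on $[p-1]$.

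Writing $\mu(t) = q\pi_2^A(t)$ gives $\pi_2^B(t) = (\pi_2(t) - \mu(t))/(1-q)$ and $\pi_3^B(t) = (\pi_3(t) - \mu(p-1-t))/(1-q)$; the expectation identity $\mathbb{E}(\pi_1')+\mathbb{E}(\pi_2^B)+\mathbb{E}(\pi_3^B) = p-2$ and the marginal normalisations are automatic. The problem reduces to producing $\mu \geq 0$ supported on $\{1,\ldots,p-2\}$ with $\sum \mu = q$ such that $\pi_2(\cdot) - \mu(\cdot)$ and $\pi_3(\cdot) - \mu(p-1-\cdot)$ are both decreasing in their argument. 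Unwinding these monotonicity requirements gives the differential inequalities
\[ -h_2(s) \;\leq\; \mu(s+1) - \mu(s) \;\leq\; h_3'(s) \quad \text{for } s \in \{0,\ldots,p-2\}, \]
where $h_2(s) = \pi_2(s) - \pi_2(s+1) \geq 0$ and $h_3'(s) = \pi_3(p-2-s) - \pi_3(p-1-s) \geq 0$, with the conventions $\mu(0) = \mu(p-1) = 0$. Since the feasible set of $\mu$'s is convex and invariant under multiplication by scalars in $[0,1]$, it suffices to exhibit one feasible $\mu$ with total mass at least $q$ and scale down to total mass exactly $q$.

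A short case analysis (in each subcase the verification reduces to the monotonicity of $\pi_2$ or of $\pi_3$) shows that $M(s) := \min(\pi_2(s), \pi_3(p-1-s))$ satisfies the differential inequalities, with $M(0) = M(p-1) = 0$ from $\pi_2(p-1) = \pi_3(p-1) = 0$. It remains to establish $\sum_{s=1}^{p-2} M(s) \geq q$. Because $\pi_2(s)$ is decreasing in $s$ while $\pi_3(p-1-s)$ is increasing in $s$, there is a threshold $t^* \in \{1,\ldots,p-1\}$ such that $M(s) = \pi_3(p-1-s)$ for $s < t^*$ and $M(s) = \pi_2(s)$ for $s \geq t^*$. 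A telescoping computation then yields
\[ \sum_{s=1}^{p-2} M(s) \;=\; \mathbb{P}(\pi_2 \geq t^*) + \mathbb{P}(\pi_3 \geq p - t^*), \]
and Lemma \ref{L:inductionstep} applied at $t = t^* - 1$ delivers exactly $q \leq \mathbb{P}(\pi_2 \geq t^*) + \mathbb{P}(\pi_3 \geq p - t^*)$. Scaling $M$ down to total mass $q$ produces the desired $\mu$, and combining the event-$A$ construction (with $\pi_2^A(t) = \mu(t)/q$) with the inductive coupling on event $B$ (setting $X_1 = X_1' + 1$) yields a joint law of $(X_1, X_2, X_3)$ with the required marginals and $X_1 + X_2 + X_3 = p-1$. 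The main obstacle is recognising that Lemma \ref{L:inductionstep} applied at the specific threshold $t = t^*-1$ returns precisely the quantity $\mathbb{P}(\pi_2 \geq t^*) + \mathbb{P}(\pi_3 \geq p-t^*)$ matching the maximum feasible mass of $\mu$.
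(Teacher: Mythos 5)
Your proof is correct and follows essentially the same route as the paper: condition on $\{X_1=0\}$ versus $\{X_1\geq 1\}$, use $M(k)=\min(\pi_2(k),\pi_3(p-1-k))$ as the feasible transfer, invoke Lemma~\ref{L:inductionstep} at the crossover threshold to show $\sum M\geq \mathbb{P}(\pi_1=0)$, and then reduce to the inductive hypothesis on $[p-1]$. The only cosmetic difference is the normalization step: the paper reduces $M$ to total mass $q$ by capping it as $\min(\pi_2(k),\pi_3(p-1-k),x)$ for a suitable level $x$ (and checks directly that the capped function still leaves $\pi_2-f$ and $\pi_3-f(p-1-\cdot)$ non-increasing), whereas you scale $M$ by a constant $c\in[0,1]$ and argue feasibility via convexity of the constraint polytope and feasibility of the origin. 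Both normalizations work; the paper's capping has the small advantage of not needing to re-verify the constraints under scaling, but your convexity observation is clean and equally valid.
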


\begin{proof}
Clearly $\pi_2(0)>\pi_3(p-1)$ and $\pi_2(p-1)<\pi_3(0)$, so there must exist
an integer $t$ in the range $0\le t<p$ such that $\pi_2(k)>\pi_3((p-1)-k)$ for $k\le t$ and
$\pi_2(k)\le\pi_3((p-1)-k)$ for $k>t$. 

This means that 
\[\Sigma_{k\le t}\min(\pi_2(k), \pi_3((p-1)-k))=\Sigma_{k\le t}\pi_3((p-1)-k)=\mathbb{P}(\pi_3\ge p-1-t)\]
and
\[\Sigma_{k>t}\min(\pi_2(k), \pi_3((p-1)-k))=\Sigma_{k>t}\pi_2(k)=
\mathbb{P}(\pi_2>t).\]

By Lemma~\ref{L:inductionstep}, it follows that
$\mathbb{P}(\pi_1=0)\le\Sigma_k\min(\pi_2(k), \pi_3((p-1)-k)).$
Thus there exists a real number $x$ such that 
\[\mathbb{P}(\pi_1=0)=\Sigma_k\min(\pi_2(k), \pi_3((p-1)-k), x).\]

For $0\le k\le p-1$, let $f(k)=\min(\pi_2(k), \pi_3((p-1)-k), x)$. Since 
$\min(\pi_2(k), x)$ is non-increasing, if $f(k)<f(k+1)$, 
$f(k)=\pi_3((p-1)-k)$. Similarly, if $f(k)>f(k+1)$, $f(k+1)=\pi_2(k+1)$.

Therefore if we define $g_2(k)=\pi_2(k)-f(k)$, either $f(k)\le f(k+1)$, 
and hence $g_2(k)\ge g_2(k+1)$, or $f(k)>f(k+1)$, in which case 
$g_2(k)\ge 0=g_2(k+1)$. Thus $g_2$ is non-increasing. Similarly, so is
$g_3(k)=\pi_3(k)-f((p-1)-k)$.

If we start defining our dependent random variables by
saying that with probability $f(k)$, $X_1=0$, 
$X_2=k$ and $X_3=(p-1)-k$, 
the remaining distribution on $\pi_1$ is 
non-increasing on $\{1, \ldots, p-1\}$ and the 
remaining distributions on 
$\pi_2$ and $\pi_3$ are non-increasing on $\{0, \ldots, p-2\}$.

Thus if we subtract 1 from the remaining distribution on $\pi_1$, we are
left with decreasing distributions on $\{0, \ldots, p-2\}$ with expected
values that sum to $p-2$, which are compatible. It follows that $\pi_1$, 
$\pi_2$ and $\pi_3$ must be compatible.
\end{proof}

\section{The remaining case}
Corollary~\ref{C:inductionstep} is enough to cover almost all cases of the assumptions of
Corollary~\ref{C:simpleDistributions}. In this section we will deal with the
remaining case.

\begin{lem}\label{L:lastcase}
For any non-negative integers $m$ and $n$ one can pair distributions 
$X$ and $Y$ that are uniform on $\{0, 1, \ldots, m\}$ and
$\{0, 1, \ldots, n\}$ such that $X+Y$ is uniform on
$\{0, 1, \ldots, m+n\}$.
\end{lem}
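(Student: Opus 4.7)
My plan is to construct the required coupling explicitly, rather than argue existence abstractly. I would pick uniformly at random a monotone lattice path from $(0, 0)$ to $(m, n)$ (a sequence of $m$ right steps and $n$ up steps, of which there are $\binom{m+n}{m}$), and, independently, pick $K$ uniformly from $\{0, 1, \ldots, m+n\}$. Define $(X, Y)$ to be the lattice point visited by the path after $K$ steps. Then automatically $X \in \{0, \ldots, m\}$, $Y \in \{0, \ldots, n\}$, and $X + Y = K$ is uniform on $\{0, \ldots, m+n\}$ by construction.

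All that is left is to check that $X$ (and by symmetry $Y$) has uniform marginal. Conditioning on $K = k$, the number of lattice paths that pass through $(i, k-i)$ equals $\binom{k}{i}\binom{m+n-k}{m-i}$, so
\[
\mathbb{P}(X = i) \;=\; \frac{1}{(m+n+1)\binom{m+n}{m}} \sum_{k=0}^{m+n} \binom{k}{i}\binom{m+n-k}{m-i}.
\]
I would then invoke the standard hockey-stick/Vandermonde identity $\sum_{k}\binom{k}{a}\binom{N-k}{b} = \binom{N+1}{a+b+1}$ with $a = i$, $b = m-i$, $N = m+n$; this collapses the sum to $\binom{m+n+1}{m+1}$. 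A one-line simplification (using $\binom{m+n+1}{m+1}/\binom{m+n}{m} = (m+n+1)/(m+1)$) then gives $\mathbb{P}(X = i) = 1/(m+1)$ as required. The marginal of $Y$ is handled by reflecting the path across the diagonal, which swaps the roles of $m$ and $n$.

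There is no serious obstacle: the entire argument is a one-sentence construction (random lattice path plus independent random stopping time) followed by a single routine binomial identity. The only step that requires any thought is choosing the construction in the first place; once the lattice-path picture is in hand, the marginal computation is mechanical, and in particular there is no need for the induction or convexity machinery used in the preceding sections.
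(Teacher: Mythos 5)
Your construction is correct, and the underlying coupling is in fact the same as the paper's, but the proof strategy you use to verify the marginals is meaningfully different. The paper takes $m+n+1$ i.i.d.\ continuous random variables $A_1,\dots,A_m,B_1,\dots,B_n,C$ and sets $X=|\{i:A_i<C\}|$, $Y=|\{j:B_j<C\}|$. Your uniformly random monotone lattice path is (in disguise) the interleaving pattern of the sorted $A$'s and $B$'s, and your $K$ is the rank of $C$ among all $m+n+1$ values, so the two joint distributions coincide. What differs is how the marginals are checked: in the paper's framing each uniformity claim is immediate by exchangeability --- $X$ depends only on the relative order within $\{A_1,\dots,A_m,C\}$ and $C$ is equally likely to occupy any of the $m+1$ ranks there, and likewise for $Y$ and for $X+Y$ --- so no computation is needed at all. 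Your purely discrete version instead pays for avoiding auxiliary continuous random variables by invoking the parallel-summation/hockey-stick identity $\sum_k\binom{k}{a}\binom{N-k}{b}=\binom{N+1}{a+b+1}$ to collapse the sum. Both arguments are correct; the paper's is shorter and computation-free, while yours is more self-contained combinatorially and makes the coupling fully explicit without reference to an external continuous source of randomness.
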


\begin{proof}
Let $A_1, \ldots, A_m, B_1, \ldots, B_n$ and $C$ be 
independent identically distributed random variables with continuous
distributions. Denote $|{i:A_i<C}|$ by $X$ and $|{j:B_j<C}|$
by $Y$. Clearly 
$X$ is uniform on $\{0, 1, \ldots, m\}$, $Y$ is uniform on
$\{0, 1, \ldots, n\}$ and $X+Y$ is uniform on $\{0, 1, \ldots, m+n\}$.
\end{proof}

Combining two instances of this lemma together gives the following.

\begin{cor}\label{C:lastcase}
For any integers $i$ and $j$ with $i>0$, $j\ge 0$ 
and $i+j<p-1$, the distributions
$U_i$, $U_{p-1}$ and $V_{j,p-1,p-1-i}$ are compatible.
\end{cor}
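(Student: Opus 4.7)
The plan is to construct the required coupling explicitly as a mixture of two cases, each resolved by a single application of Lemma~\ref{L:lastcase}. The crux is to use a non-obvious decomposition of the target mixture distribution, namely
\[
V_{j,p-1,p-1-i}\;=\;\tfrac{i+j+1}{p}\,U_j\;+\;\tfrac{p-1-i-j}{p}\,U^{*},
\]
where $U^{*}$ denotes the uniform distribution on $\{j+1,\ldots,p-1\}$. Both weights are strictly positive because $i+j<p-1$, and the equality is checked by comparing the two constant levels of $V_{j,p-1,p-1-i}$ on $\{0,\ldots,j\}$ and on $\{j+1,\ldots,p-1\}$. The advantage of this decomposition over the familiar $\alpha U_j+\beta U_{p-1}$ is that $U^{*}$ stays away from the top of $\{0,\ldots,p-1\}$, which will make the sum constraint $X_1+X_2+X_3=p-1$ automatic in the second branch.

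In Case~I (probability $(i+j+1)/p$) I apply Lemma~\ref{L:lastcase} with parameters $(i,j)$ to obtain a pairing $X_1\sim U_i$, $Y\sim U_j$ with $X_1+Y\sim U_{i+j}$; then I put $X_3=Y$ and $X_2=p-1-X_1-X_3$. Because $X_1+X_3\le i+j<p-1$, the variable $X_2$ lies in $\{p-1-i-j,\ldots,p-1\}$ and is uniform on that range. In Case~II (probability $(p-1-i-j)/p$) I apply Lemma~\ref{L:lastcase} a second time, now with parameters $(i,p-2-i-j)$, to obtain $X_1\sim U_i$, $X_4\sim U_{p-2-i-j}$ with $X_1+X_4\sim U_{p-2-j}$; I set $X_2=X_4$ and $X_3=p-1-X_1-X_2$, so that $X_3=p-1-(X_1+X_4)$ is uniform on $\{j+1,\ldots,p-1\}$, agreeing with $U^{*}$. (When $i+j=p-2$ the second Lemma application is the trivial case $n=0$.)

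It remains to verify the three marginals. The marginal of $X_1$ is $U_i$ conditionally in both cases, hence unconditionally. The marginal of $X_3$ is exactly $V_{j,p-1,p-1-i}$ by the chosen decomposition. For $X_2$, the supports $\{p-1-i-j,\ldots,p-1\}$ (Case~I) and $\{0,\ldots,p-2-i-j\}$ (Case~II) are disjoint complementary subsets of $\{0,\ldots,p-1\}$, and the case probabilities have been chosen so that the case-weight times the conditional uniform density equals $1/p$ on each piece; hence $X_2\sim U_{p-1}$. By construction $X_1+X_2+X_3=p-1$ in both cases, so the three distributions are compatible. The main obstacle is spotting the correct decomposition: the natural $\alpha U_j+\beta U_{p-1}$ fails because in the $U_{p-1}$ branch one cannot force $X_1+X_3\le p-1$ while preserving both marginals, whereas replacing $U_{p-1}$ by $U^{*}$ turns each branch into a direct instance of Lemma~\ref{L:lastcase}.
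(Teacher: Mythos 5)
Your proposal is correct and takes essentially the same approach as the paper: both flip a coin with heads probability $(i+j+1)/p$, then apply Lemma~\ref{L:lastcase} twice with exactly the same parameters and the same role-assignments for $X_1, X_2, X_3$ in each branch. The only difference is presentational — you lead with the explicit decomposition of $V_{j,p-1,p-1-i}$ into $U_j$ and the shifted uniform $U^*$ as motivation, whereas the paper constructs the coupling directly and verifies the three marginals afterward.
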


\begin{proof}
Firstly flip a coin which lands on heads with probability $\frac{i+j+1}{p}$.

If the coin comes up heads, use Lemma~\ref{L:lastcase} to sample random
variables $X$ and $Y$ where $X$ is uniform on $\{0, 1, \ldots, i\}$,
$Y$ is uniform on $\{0, 1, \ldots, j\}$ and $X+Y$ is uniform on 
$\{0, 1, \ldots, i+j\}$. Then let $X_1=X, X_2=(p-1)-X-Y$ 
and $X_3=Y$.

If the coin comes up tails, use Lemma~\ref{L:lastcase} to sample random
variables $X'$ and $Y'$ where $X'$ is uniform on $\{0, 1, \ldots, i\}$,
$Y'$ is uniform on $\{0, 1, \ldots, p-(i+j+2)\}$ 
and $X'+Y'$ is uniform on
$\{0, 1, \ldots, p-(j+2)\}$. Then let $X_1=X', X_2=Y'$
and $X_3=(p-1)-X'-Y'$.

$X_1$ is uniform on $\{0, 1, \ldots, i\}$ regardless of the coin-flip.
$X_2$ is uniform on $\{0, 1, \ldots, p-(i+j+2)\}$ with
probability $\frac{p-(i+j+1)}{p}$ and uniform on 
$\{p-(i+j+1),\ldots,p-1\}$ with probability $\frac{i+j+1}{p}$, so is uniform
on $\{0, \ldots, p-1\}$.

Finally $X_3$ is a combination of uniform on $\{0, 1, \ldots, j\}$
and uniform on $\{j+1, \ldots, p-1\}$ with mean $\frac{p-1-i}2$, so it must be 
$V_{j,p-1,p-1-i}$.
\end{proof}

\section{Putting it all together}
We now have all of the ingredients for the proof of Theorem~\ref{T:main}.

\begin{proof}[Proof of Theorem~\ref{T:main}]
We proceed by induction on $p$.

If $p=1$, $\pi_1$, $\pi_2$ and $\pi_3$ are constant equal to 0 and are trivially compatible.

Now suppose that Theorem~\ref{T:main} is true for $p-1$, and we will 
attempt to prove it for $p$. By Corollary~\ref{C:simpleDistributions},
it is sufficient to prove that $U_k$, $U_l$ and $U_m$ are 
compatible when 
$0\le k, l, m<p$ and $k+l+m=2(p-1)$ and $U_k, U_l$ and 
$V_{m,n,2(p-1)-k-l}$ are compatible when $0\le k, l, m, n<p$ with
$k+l+m<2(p-1)<k+l+n$.

By Corollary~\ref{C:inductionstep}, the induction hypothesis implies
it is true for all such $U_k, U_l$ and $U_m$ if two of $k, l$ and $m$
are less than $p-1$ and
for all such $U_k, U_l$ and $V_{m,n,2(p-1)-k-l}$ if two of $k, l$ and 
$n$ are less than $p-1$.

So, for $U_k, U_l$ and $U_m$ we can assume that at least two of 
$k, l$ and $m$ are equal to $p-1$. Since $k+l+m=2(p-1)$, it follows
that two are uniform on $[p]$ and the other is constant equal to 0.
Then these are compatible: let $X_1=0$, $X_2$ be uniform
on $[p]$ and let $X_3=(p-1)-X_2$.

Similarly, for $U_k, U_l$ and $V_{m,n,2(p-1)-k-l}$, we can assume that 
at least two of $k, l$ and $n$ are equal to $p-1$. Since $k+l+m<2(p-1)$,
it follows that $n$ must equal $p-1$ and one of the others must be
equal to $p-1$, so they are of the form stated in Corollary~\ref{C:lastcase}.
\end{proof}

Finally, since the distribution $\psi_\rho$ is clearly 
decreasing (as $\mathbb{E}\psi_\rho=\frac{p-1}3<\frac{p-1}2$, and hence $\rho<1$), it follows from Theorem~\ref{T:main} that
$\psi_\rho$ is compatible with two copies of itself,
which means that $\psi_\rho\in\Delta_p$, completing
the proof by Kleinberg-Sawin-Speyer that
the Ellenberg-Gijswijt bound
is essentially tight for tri-coloured sum-free sets.

\bibliographystyle{amsplain}

\begin{thebibliography}{99}
\bibitem{BlasiakEtAl}
Jonah Blasiak, Thomas Church, Henry Cohn, Joshua~A. Grochow, Eric Naslund, William~F. Sawin, and
Chris Umans.
\newblock On cap sets and the group-theoretic approach to matrix multiplication.
\newblock {\em Discrete Analysis}, 2017:3 27pp	

\bibitem{CrootLevPach}
Ernie Croot, Vsevolod Lev and Peter Pach.
\newblock Progression-free sets in $\mathbb{Z}_4^n$ are exponentially small.
\newblock {\em Ann. of Math.}, 185(1):333-337, 2017

\bibitem{EllenbergGijswijt}
Jordan S. Ellenberg and Dion Gijswijt.
\newblock On large subsets of $F_q^n$ with no three-term arithmetic progression
\newblock {\em Ann. of Math.}, 185(1):339-343, 2017

\bibitem{KleinbergSawinSpeyerEarly}
Robert Kleinberg, William~F. Sawin, and David~E. Speyer.
\newblock The growth rate of tri-colored sum-free sets (preprint).
\newblock 2017, arXiv:1607.00047v1

\bibitem{KleinbergSawinSpeyer}
Robert Kleinberg, William~F. Sawin, and David~E. Speyer.
\newblock The growth rate of tri-colored sum-free sets.
\newblock {\em Discrete Analysis}, 2018:12, 10pp

\end{thebibliography}


\begin{dajauthors}
\begin{authorinfo}[pgom]
  Luke Pebody\\
  Rokos Capital Management\\
  23 Savile Row\\
  London, United Kingdom\\
  luke\imageat{}pebody\imagedot{}org 
\end{authorinfo}
\end{dajauthors}

\end{document}